\title{\bf On Blow-Ups and Injectivity of Quivers}
\author{Will Grilliette\\
\small Department of Mathematics\\[-0.8ex]
\small Tyler Junior College\\[-0.8ex] 
\small Tyler, TX\\
\small\tt w.b.grilliette@gmail.com\\
\and
Deborah E.\ Seacrest\qquad Tyler Seacrest\\
\small Department of Mathematics\\[-0.8ex]
\small University of Montana Western\\[-0.8ex] 
\small Dillon, MT\\
\small\tt \{d\_seacrest,t\_seacrest\}@umwestern.edu\\
}
\date{\dateline{Oct 3, 2012}{Feb 10, 2013}\\
\small Mathematics Subject Classifications: 05C60}
\newtheorem{thm}{Theorem}[section]
\newtheorem{prop}[thm]{Proposition}
\newtheorem{cor}[thm]{Corollary}
\newtheorem{lem}[thm]{Lemma}
\theoremstyle{definition}
\newtheorem*{defn}{Definition}
\theoremstyle{remark}
\newtheorem{ex}[thm]{Example}
\numberwithin{figure}{section}
\DeclareMathOperator{\edges}{edges}
\newcommand{\Set}{\mathbf{Set}}
\newcommand{\Quiv}{\mathbf{Quiv}}
\newcommand{\cat}[1]{\mathscr{#1}}
\DeclareMathOperator{\Ran}{ran}
\DeclareMathOperator{\ob}{Ob}
\begin{document}

\maketitle
\begin{abstract}
This work connects the idea of a ``blow-up" of a quiver with that of injectivity, showing that for a class of monic maps $\Phi$, a quiver is $\Phi$-injective if and only if all blow-ups of it are as well.  This relationship is then used to characterize all quivers that are injective with respect to the natural embedding of $P_{n}$ into $C_{n}$.
\end{abstract}

\section{Introduction}
This paper continues the study of injectivity for quivers begun in \cite{grilliette3}.  Recall that a quiver is a directed multigraph with loops, and a quiver homomorphism is a pair of maps preserving the adjacency structure.  Explicitly, the definitions below will be used throughout the paper, where $\Set$ is the category of sets with functions.

\begin{defn}[Quiver, {\cite[Definition 2.1]{graphtransformation}}]
A \emph{quiver} is a quadruple $(V,E,\sigma,\tau)$, where ${V,E\in\ob(\Set)}$ are sets, and $\sigma,\tau\in\Set(E,V)$ are functions.  Elements of $V$ are \emph{vertices}, and $V$ the \emph{vertex set}.  Elements of $E$ are \emph{edges}, and $E$ the \emph{edge set}.  The function $\sigma$ is the \emph{source map}, and $\tau$ the \emph{target map}.  For $e\in E$, $\sigma(e)$ is the \emph{source} of $e$, and $\tau(e)$ the \emph{target} of $e$.
\end{defn}

\begin{defn}[Quiver map, {\cite[Definition 2.4]{graphtransformation}}]
Given quivers $G$ and $H$, a \emph{quiver homomorphism} from $G$ to $H$ is a pair $\left(\phi_{V},\phi_{E}\right)$, where $\phi_{V}\in\Set\left(V_{G},V_{H}\right)$ and $\phi_{E}\in\Set\left(E_{G},E_{H}\right)$ satisfy $\phi_{V}\circ\sigma_{G}=\sigma_{H}\circ\phi_{E}$ and $\phi_{V}\circ\tau_{G}=\tau_{H}\circ\phi_{E}$.  The function $\phi_{V}$ is the \emph{vertex map}, and $\phi_{E}$ the \emph{edge map}.
\end{defn}

Let $\Quiv$ denote the category of quivers in which the composition of quiver homomorphisms is defined component-wise.  The notation of this paper will follow that established in \cite{grilliette3}.  Please also observe that to ease notation, the functors $V$ and $E$ will be omitted on maps, as is convention in graph theory.  In \cite{grilliette3}, the class of quivers which are injective with respect to all monic quiver maps was characterized using the following generalization of a complete graph.

\begin{defn}[{\cite[Definition 3.1.1]{grilliette3}}]
For a quiver $J$ and $v,w\in V(J)$, let
\[
\edges_{J}(v,w):=\sigma_{J}^{-1}(v)\cap\tau_{J}^{-1}(w)
\]
be the set of all edges in $J$ with source $v$ and target $w$.  A quiver $J$ is \emph{loaded} if for every $v,w\in V(J)$, $\edges_{J}(v,w)\neq\emptyset$.
\end{defn}

Essentially, a loaded quiver is a complete digraph, which may have multiple edges.  From \cite[Proposition 3.2.1]{grilliette3}, a quiver is injective with respect to all monic quiver maps if and only if it is loaded and has a nonempty vertex set.

The notion of a ``blow-up'' of a quiver, analogous to the concept for simple graphs in \cite{hatami2011}, is introduced and shown to be intimately tied to quiver injectivity in Theorem \ref{converse}.  That is, if a quiver is injective, all of its blow-ups are also injective.  As injectivity classes are closed on retraction, an injective quiver yields a family of injectives:  its retracts and their blow-ups.  Section \ref{retraction-section} characterizes quiver sections and retractions, which are used in Section \ref{blowup-section} to show that a blow-up retracts onto the original quiver and yield the main result in Theorem \ref{converse}.

The remainder of the paper is dedicated to an example of using blow-ups to characterize a class of injective objects.  For $n\geq 2$, let $\xymatrix{P_{n}\ar[r]^{\phi_{n}} & C_{n}}$ be the natural embedding of the directed path $P_{n}$ into the directed cycle $C_{n}$.  Propositions \ref{ncyclic-union} and \ref{nowalk} and Theorem \ref{structure} characterize a $\phi_{n}$-injective quiver as a quiver such that the connected components are either blow-ups of $C_{m}$ for $m\mid n$ or quivers with no walk of length $n-1$.  This seems to show that injectivity yields a propagation of structure, as observed in Lemmas \ref{propagation}, \ref{triangulation}, and \ref{loaded}.

Studying quivers injective relative to specific maps seems to be an interesting way in which a local condition propagates to give rise to global structure.  In the case of embedding $P_n$ into $C_n$, this process yields the surprisingly simple structure of blow-ups of $C_n$.  The case of embedding $P_3$ into a transitive triangle in the natural way gives rise to ``transitive digraphs" \cite[\S 4.3]{bang2008}, which correspond to directed sets.   It may be that embeddings not yet studied give rise to other interesting structures.

Because injectivity is a categorical notion, any structure resulting from injectivity can be compared to an algebraic counterpart.  Since the notion of a retract is vital to the proof of Theorems \ref{converse} and \ref{structure}, this is also one of the very few times categorical methods have been used in graph theory to prove combinatorial results.

The authors would like to thank the referees of this paper for their comments and patience in its revision.

\section{Sections and Retractions in $\Quiv$}\label{retraction-section}

Recall from \cite[Definition 7.19]{joyofcats} that a \emph{section} in a category is a morphism that is left-invertible.  Also, \cite[Definition 7.24]{joyofcats} gives the dual notion of a section as a \emph{retraction}, a right-invertible morphism.  Traditionally, the codomain of a retraction, or equivalently the domain of a section, is termed a \emph{retract}.

For quivers, a section can be recognized by finding a certain type of partition of the codomain's vertices and edges.

\begin{prop}[Characterization of sections]\label{section}
A homomorphism $\xymatrix{G\ar[r]^{j} & H}\in\Quiv$ is a section if and only if there are partitions $\left(A_{v}\right)_{v\in V(G)}$ of $V(H)$ and $\left(B_{e}\right)_{e\in E(G)}$ of $E(H)$ satisfying
\begin{enumerate}
\item $j(v)\in A_{v},$
\item $j(e)\in B_{e},$
\item $\sigma_{H}(f)\in A_{\sigma_{G}(e)}$,
\item $\tau_{H}(f)\in A_{\tau_{G}(e)}$,
\end{enumerate}
for all $v\in V(G)$, $e\in E(G)$, and $f\in B_{e}$.  In this case, $G$ is a retract of $H$.
\end{prop}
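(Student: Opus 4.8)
The plan is to prove both directions of the biconditional and then extract the retract claim.

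For the forward direction, suppose $j$ is a section, so there is a quiver homomorphism $\xymatrix{H\ar[r]^{r} & G}$ with $r\circ j = \mathrm{id}_G$. I would define $A_v := r_V^{-1}(v)$ for each $v\in V(G)$ and $B_e := r_E^{-1}(e)$ for each $e\in E(G)$. Since $r_V$ and $r_E$ are functions on $V(H)$ and $E(H)$ respectively, their fibers partition $V(H)$ and $E(H)$ (allowing empty blocks, which the statement's use of ``partition'' must permit, as surjectivity of $r$ is not assumed — I would note this convention). Property (1) holds because $r(j(v)) = v$ puts $j(v)$ in the fiber $A_v$; similarly (2) holds from $r(j(e)) = e$. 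For (3) and (4), take $e\in E(G)$ and $f\in B_e$, so $r(f) = e$. Applying the homomorphism condition for $r$, $r(\sigma_H(f)) = \sigma_G(r(f)) = \sigma_G(e)$, so $\sigma_H(f)\in A_{\sigma_G(e)}$, and likewise $\tau_H(f)\in A_{\tau_G(e)}$.

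For the reverse direction, given partitions satisfying (1)--(4), I would build $r$ by sending each vertex in $A_v$ to $v$ and each edge in $B_e$ to $e$; this is well-defined precisely because the blocks partition $V(H)$ and $E(H)$. The conditions (3) and (4) are exactly what is needed to check that $r = (r_V, r_E)$ is a quiver homomorphism: for $f\in E(H)$, let $e$ be the unique index with $f\in B_e$; then $r_V(\sigma_H(f)) = \sigma_G(e) = \sigma_G(r_E(f))$ by (3) and the definition of $r_V$, and symmetrically for $\tau$. Finally, (1) and (2) give $r_V(j(v)) = v$ and $r_E(j(e)) = e$ for all $v, e$, so $r\circ j = \mathrm{id}_G$, witnessing that $j$ is a section. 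The last sentence then follows immediately: $r$ is right-invertible (by $j$), hence a retraction, so $G$ is a retract of $H$ by the definition recalled just above the proposition.

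I do not expect any serious obstacle here — this is essentially unwinding the definition of a split monomorphism component-wise, with the fiber construction doing all the work. The one subtlety worth flagging is the meaning of ``partition'': since $j$ need not be an isomorphism onto a spanning subquiver and $r$ need not be surjective, some blocks $A_v$ or $B_e$ may be empty, so ``partition'' here should be read as a family of pairwise-disjoint (possibly empty) blocks covering the set. I would state this convention explicitly at the start of the proof so that conditions (1)--(4) are interpreted correctly, and so that the fiber partition in the forward direction is legitimate.
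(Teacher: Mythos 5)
Your proof is correct and follows essentially the same route as the paper: fibers of the left inverse give the partitions in the forward direction, and the partitions define the left inverse in the reverse direction. One small remark: the caveat about possibly empty blocks is unnecessary, since conditions (1) and (2) force $j(v)\in A_{v}$ and $j(e)\in B_{e}$, so every block is nonempty (and in the forward direction $q\circ j=id_{G}$ makes the left inverse surjective, so its fibers are nonempty as well).
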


\begin{proof}

$\left(\Rightarrow\right)$ Let $\xymatrix{H\ar[r]^{q} & G}\in\Quiv$ satisfy that $q\circ j=id_{G}$.  For $v\in V(G)$ and $e\in E(H)$, define $A_{v}:=q^{-1}(v)$ and $B_{e}:=q^{-1}(e)$, which partition $V(H)$ and $E(H)$.  Notice that
\[\begin{array}{ccc}
v=(q\circ j)(v)=q\left(j(v)\right)	&	\textrm{and}	&	e=(q\circ j)(e)=q\left(j(e)\right)\\
\end{array}\]
for all $v\in V(G)$ and $e\in E(G)$.  Thus, $j(v)\in A_{v}$ and $j(e)\in B_{e}$.  Also,
\[
q\left(\sigma_{H}(f)\right)=\sigma_{G}\left(q(f)\right)=\sigma_{G}(e)
\]
and
\[
q\left(\tau_{H}(f)\right)=\tau_{G}\left(q(f)\right)=\tau_{G}(e)
\]
for all $e\in V(G)$ and $f\in B_{e}$.  Thus, $\sigma_{H}(f)\in A_{\sigma_{G}(e)}$ and $\tau_{H}(f)\in A_{\tau_{G}(e)}$

$\left(\Leftarrow\right)$ Define $\xymatrix{V(H)\ar[r]^{q_{V}} & V(G)}\in\Set$ and $\xymatrix{E(H)\ar[r]^{q_{E}} & E(G)}\in\Set$ by
\[\begin{array}{ccc}
q_{V}(w):=v	&	\textrm{and}	&	q_{E}(f):=e,
\end{array}\]
where $w\in A_{v}$ and $f\in B_{e}$.  Both are well-defined, as $\left(A_{v}\right)_{v\in V(G)}$ and $\left(B_{e}\right)_{e\in E(G)}$ are partitions.  For $f\in E(H)$, there is $e\in E(G)$ such that $f\in B_{e}$.  Thus,
\[
\sigma_{G}\left(q_{E}(f)\right)
=\sigma_{G}(e)
=q_{V}\left(\sigma_{H}(f)\right),
\]
as $\sigma_{H}(f)\in A_{\sigma_{G}(e)}$.  Likewise, $\tau_{G}\left(q_{E}(f)\right)=q_{V}\left(\tau_{H}(f)\right)$ by a similar argument using ${\tau_{H}(f)\in A_{\tau_{G}(e)}}$.  Thus, $q:=\left(q_{V},q_{E}\right)$ is a quiver homomorphism.  Moreover,
\[\begin{array}{ccc}
(q\circ j)(v)=q_{V}\left(j(v)\right)=v	&	\textrm{and}	&	(q\circ j)(e)=q_{E}\left(j(e)\right)=e\\
\end{array}\]
as $j(v)\in A_{v}$ and $j(e)\in B_{e}$ for all $v\in V(G)$ and $e\in E(G)$.  Thus, $q\circ j=id_{G}$.

\end{proof}

Dually, a retraction can be recognized by finding pre-images of the vertices and edges that align with the structure maps.

\begin{prop}[Characterization of retractions]\label{retraction}
A homomorphism $\xymatrix{H\ar[r]^{q} & G}\in\Quiv$ is a retraction if and only if there are $\left(w_{v}\right)_{v\in V(G)}\subseteq V(H)$ and $\left(f_{e}\right)_{e\in E(G)}\subseteq E(H)$ satisfying
\begin{enumerate}
\item $q\left(w_{v}\right)=v,$
\item $q\left(f_{e}\right)=e,$
\item $\sigma_{H}\left(f_{e}\right)=w_{\sigma_{G}(e)}$,
\item $\tau_{H}\left(f_{e}\right)=w_{\tau_{G}(e)}$,
\end{enumerate}
for all $v\in V(G)$ and $e\in E(G)$.  In this case, $G$ is a retract of $H$.
\end{prop}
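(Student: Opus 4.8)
The plan is to mirror the proof of Proposition~\ref{section}, but in this direction the argument is actually shorter: we are handed explicit preimages rather than a partition, so no well-definedness check is needed anywhere.

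For the forward implication, I would start from a right inverse $q$, say a homomorphism $j\colon G\to H$ with $q\circ j=id_{G}$, and simply set $w_{v}:=j(v)$ for $v\in V(G)$ and $f_{e}:=j(e)$ for $e\in E(G)$. Conditions (1) and (2) are then immediate from $q\circ j=id_{G}$, since $q(w_{v})=q(j(v))=v$ and $q(f_{e})=q(j(e))=e$. Conditions (3) and (4) are exactly the assertion that $j$ is a quiver homomorphism: $\sigma_{H}(f_{e})=\sigma_{H}(j(e))=j(\sigma_{G}(e))=w_{\sigma_{G}(e)}$, and identically with $\tau$ in place of $\sigma$.

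For the converse, given the families $(w_{v})_{v\in V(G)}$ and $(f_{e})_{e\in E(G)}$, I would define $j_{V}\colon V(G)\to V(H)$ by $j_{V}(v):=w_{v}$ and $j_{E}\colon E(G)\to E(H)$ by $j_{E}(e):=f_{e}$; these are honest functions with nothing to check for well-definedness. Conditions (3) and (4) then read $\sigma_{H}(j_{E}(e))=w_{\sigma_{G}(e)}=j_{V}(\sigma_{G}(e))$ and $\tau_{H}(j_{E}(e))=j_{V}(\tau_{G}(e))$, so $j:=(j_{V},j_{E})$ is a quiver homomorphism. Finally, conditions (1) and (2) give $(q\circ j)(v)=q(w_{v})=v$ and $(q\circ j)(e)=q(f_{e})=e$, so $q\circ j=id_{G}$; hence $q$ is a retraction and $G$ is a retract of $H$.

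I expect no real obstacle here. The only thing to be careful about is the bookkeeping of the four conditions against the two requirements (being a right inverse of $q$, and being a homomorphism), together with the observation that — unlike the section case — no assignment needs to be shown well defined, because the preimages are supplied directly. If one prefers, this proposition can also be obtained formally as the dual of Proposition~\ref{section}, but writing it out directly is just as quick.
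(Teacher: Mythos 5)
Your proposal is correct and matches the paper's proof essentially line for line: defining $w_{v}:=j(v)$, $f_{e}:=j(e)$ from a right inverse $j$ for one direction, and assembling $j:=\left(j_{V},j_{E}\right)$ from the given families for the other, with conditions (3) and (4) supplying the homomorphism property and (1) and (2) supplying $q\circ j=id_{G}$. Your remark that no well-definedness check is needed (in contrast to Proposition \ref{section}) is also consistent with the paper's treatment.
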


\begin{proof}

$\left(\Rightarrow\right)$ Let $\xymatrix{G\ar[r]^{j} & H}\in\Quiv$ satisfy that $q\circ j=id_{G}$.  Define $w_{v}:=j(v)$ and $f_{e}:=j(e)$ for $v\in V(G)$ and $e\in E(G)$.  Observe that
\[
v=(q\circ j)(v)=q\left(j(v)\right)=q\left(w_{v}\right)
\]
and
\[
e=(q\circ j)(e)=q\left(j(e)\right)=q\left(f_{e}\right)
\]
for all $v\in V(G)$ and $e\in E(G)$.  Also,
\[
\sigma_{H}\left(f_{e}\right)
=\left(\sigma_{H}\circ j\right)(e)
=\left(j\circ\sigma_{G}\right)(e)
=w_{\sigma_{G}(e)}
\]
and
\[
\tau_{H}\left(f_{e}\right)
=\left(\tau_{H}\circ j\right)(e)
=\left(j\circ\tau_{G}\right)(e)
=w_{\tau_{G}(e)}
\]
for all $e\in E(G)$.

$\left(\Leftarrow\right)$ Define $\xymatrix{V(G)\ar[r]^{j_{V}} & V(H)}\in\Set$ and $\xymatrix{E(G)\ar[r]^{j_{E}} & E(H)}\in\Set$ by
\[\begin{array}{ccc}
j_{V}(v):=w_{v}	&	\textrm{and}	&	j_{E}(e):=f_{e}.\\
\end{array}\]
For $e\in E(G)$,
\[
\left(\sigma_{H}\circ j_{E}\right)(e)
=\sigma_{H}\left(f_{e}\right)
=w_{\sigma_{G}(e)}
=\left(j_{V}\circ\sigma_{G}\right)(e)
\]
and likewise $\left(\tau_{H}\circ j_{E}\right)(e)=\left(j_{V}\circ\tau_{G}\right)(e)$, using $\tau_{H}\left(f_{e}\right)=w_{\tau_{G}(e)}$.  Thus, $j:=\left(j_{V},j_{E}\right)$ is a quiver homomorphism.  Moreover,
\[\begin{array}{ccc}
(q\circ j)(v)=q\left(w_{v}\right)=v	&	\textrm{and}	&	(q\circ j)(e)=q\left(f_{e}\right)=e\\
\end{array}\]
for all $v\in V(G)$ and $e\in E(G)$.  Thus, $q\circ j=id_{G}$.

\end{proof}

For any category, sections are monic and retractions epic.  Due to the characterization of quiver monomorphisms, a copy of a retract can always be found in the codomain quiver of a section, or dually the domain quiver of a retraction, as illustrated below.

\begin{ex}\label{badness}
The following maps are a section and retraction pair, where the primes are associated to their counterparts.
\[\begin{array}{ccccc}
\xymatrix{
\\
v\ar@/^/[d]^{e}\\
w\ar@/^/[u]^{f}\\
}
&
\xymatrix{\\ \\ \Rightarrow \\}
&
\xymatrix{
w'\\
v\ar@/^/[d]^{e}\ar[u]^{e'}\\
w\ar@/^/[u]^{f}\ar[d]^{f'}\\
v'\\
}
&
\xymatrix{\\ \\ \Rightarrow \\}
&
\xymatrix{
\\
v\ar@/^/[d]^{e}\\
w\ar@/^/[u]^{f}\\
}
\end{array}\]
\end{ex}

\section{Blow-up of a Quiver}\label{blowup-section}

The notion of a ``blow-up'' of a simple graph $G$ is already defined in \cite[p.\ 1]{hatami2011} as a new graph constructed by replacing $v\in V(G)$ with a set of vertices $A_{v}$ and defining $x\in A_{v}$ and $y\in A_{w}$ adjacent if and only if $v$ and $w$ were adjacent in $G$.  The introduction of multiple edges and direction obscures this intuitive idea, but the blow-up of a quiver follows the same concept.

For a quiver $G$, each vertex $v\in V(G)$ is replaced with a set of vertices $A_{v}$ and each edge $e$ with a set of edges $B_{e}$, satisfying appropriate adjacency conditions.  If $\xymatrix{v\ar[r]^{e} & w}$ is an edge in $G$ and $f\in B_{e}$, the source and target of $f$ must be in $A_{v}$ and $A_{w}$, respectively.  Conversely, if $x\in A_{v}$ and $y\in B_{w}$, there must be an edge from $x$ to $y$ in the blow-up associated to $e$.

\begin{defn}
Given a quiver $G$, a \emph{blow-up} of $G$ is a quiver $H$ equipped with partitions $\left(A_{v}\right)_{v\in V(G)}$ of $V(H)$ and $\left(B_{e}\right)_{e\in E(G)}$ of $E(H)$ satisfying
\begin{enumerate}
\item $\sigma_{H}(f)\in A_{\sigma_{G}(e)}$,
\item $\tau_{H}(f)\in A_{\tau_{G}(e)}$,
\item $\edges_{H}(x,y)\cap B_{e}\neq\emptyset$,
\end{enumerate}
for all $e\in E(G)$, $f\in B_{e}$, $x\in A_{\sigma_{G}(e)}$, $y\in A_{\tau_{G}(e)}$.
\end{defn}

\begin{ex}[Reflexivity]\label{reflexive}
Any quiver $G$ is a blow-up of itself by using $A_{v}:=\{v\}$ and $B_{e}:=\{e\}$ for $v\in V(G)$ and $e\in E(G)$.
\end{ex}

\begin{ex}[Loaded]\label{loadedbouquets}
The blow-ups of the single loop bouquet $C_{1}$ are precisely all nonempty loaded quivers.
\end{ex}

\begin{ex}[Cycles]
For $n\geq 2$, the blow-ups of the directed cycle $C_{n}$ take the form below, where the bold arrows indicate the existence of at least one arrow between any vertex of the source set and any vertex of the target set.  Note that since $C_{n}$ has no loops, the sets $A_{j}$ are independent.
\[\xymatrix{
A_{1}\ar@{=>}[r]	&	A_{2}\ar@{=>}[rd]\\
&	&	\cdots\ar@{=>}[ld]\\
A_{n}\ar@{=>}[uu]	&	A_{n-1}\ar@{=>}[l]\\
}\]
Consequently, the middle quiver in Example \ref{badness} is not a blow-up of $C_{2}$.
\end{ex}

From this definition, any blow-up of a quiver can be retracted back onto the original.  This can be done by constructing a section using Proposition \ref{section}, or by constructing a retraction using Proposition \ref{retraction}.

\begin{prop}\label{blowup}
Given a quiver $G$ and a blow-up $H$ of $G$, $G$ is a retract of $H$.
\end{prop}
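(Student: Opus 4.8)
The plan is to apply one of the two characterizations just proved --- Proposition \ref{section} or Proposition \ref{retraction} --- since a blow-up comes pre-packaged with exactly the partition data that Proposition \ref{section} requires. Indeed, by definition a blow-up $H$ of $G$ is equipped with partitions $\left(A_{v}\right)_{v\in V(G)}$ of $V(H)$ and $\left(B_{e}\right)_{e\in E(G)}$ of $E(H)$ satisfying conditions (1) and (2) of Proposition \ref{section} for every $e\in E(G)$ and $f\in B_{e}$, namely $\sigma_{H}(f)\in A_{\sigma_{G}(e)}$ and $\tau_{H}(f)\in A_{\tau_{G}(e)}$. So two of the four hypotheses of Proposition \ref{section} are immediate.

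The issue is that Proposition \ref{section} also asks for a homomorphism $\xymatrix{G\ar[r]^{j} & H}$ with $j(v)\in A_{v}$ and $j(e)\in B_{e}$, whereas the blow-up definition gives us no such $j$ outright. So the main step is to manufacture one. Here is where I would instead lean on Proposition \ref{retraction}: I need to pick, for each $v\in V(G)$, a representative $w_{v}\in A_{v}$ and, for each $e\in E(G)$, a representative $f_{e}\in B_{e}$, subject to the compatibility $\sigma_{H}(f_{e})=w_{\sigma_{G}(e)}$ and $\tau_{H}(f_{e})=w_{\tau_{G}(e)}$. First choose the vertex representatives $w_{v}\in A_{v}$ arbitrarily (each $A_{v}$ is nonempty since the $A_{v}$ partition $V(H)$ --- though strictly one should note the blow-up definition as stated allows some $A_v$ to be empty only if the corresponding part of the partition is; I will treat the partition blocks as nonempty, consistent with the paper's usage). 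Then, for each edge $e\in E(G)$ with $\sigma_{G}(e)=v$ and $\tau_{G}(e)=w$, condition (3) of the blow-up definition applied to $x:=w_{v}\in A_{v}$ and $y:=w_{w}\in A_{w}$ yields $\edges_{H}(w_{v},w_{w})\cap B_{e}\neq\emptyset$, so I may select $f_{e}$ from this nonempty set. By construction $f_{e}\in B_{e}$, $\sigma_{H}(f_{e})=w_{v}=w_{\sigma_{G}(e)}$, and $\tau_{H}(f_{e})=w_{w}=w_{\tau_{G}(e)}$.

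It remains to produce the map $q$ witnessing these representatives. Since the $A_{v}$ partition $V(H)$ and the $B_{e}$ partition $E(H)$, define $q_{V}\colon V(H)\to V(G)$ by sending each vertex of $A_{v}$ to $v$, and $q_{E}\colon E(H)\to E(G)$ by sending each edge of $B_{e}$ to $e$; these are well-defined. Conditions (1) and (2) of the blow-up definition give $\sigma_{H}(f)\in A_{\sigma_{G}(e)}$ and $\tau_{H}(f)\in A_{\tau_{G}(e)}$ for every $f\in B_{e}$, which translate precisely to $q_{V}\circ\sigma_{H}=\sigma_{G}\circ q_{E}$ and $q_{V}\circ\tau_{H}=\tau_{G}\circ q_{E}$, so $q:=(q_{V},q_{E})$ is a quiver homomorphism. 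Finally $q(w_{v})=v$ and $q(f_{e})=e$ since $w_{v}\in A_{v}$ and $f_{e}\in B_{e}$, so all four hypotheses of Proposition \ref{retraction} hold and $G$ is a retract of $H$. I expect the only real subtlety to be the invocation of the Axiom of Choice in selecting the families $\left(w_{v}\right)$ and $\left(f_{e}\right)$ simultaneously, and making sure the $f_{e}$ are chosen \emph{after} the $w_{v}$ so that condition (3) can be applied to the already-chosen representatives --- everything else is a direct unwinding of the definitions against Proposition \ref{retraction}.
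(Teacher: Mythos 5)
Your proposal is correct and follows essentially the same route as the paper: both define the quotient map $q$ from the partitions, verify it is a homomorphism using conditions (1) and (2) of the blow-up, choose representatives $w_{v}\in A_{v}$ and then $f_{e}\in\edges_{H}\left(w_{\sigma_{G}(e)},w_{\tau_{G}(e)}\right)\cap B_{e}$ via condition (3), and conclude with Proposition \ref{retraction}. The only differences are presentational (order of steps and your side remarks on Proposition \ref{section} and choice), which do not affect the argument.
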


\begin{proof}

Let $\left(A_{v}\right)_{v\in V(G)}$ and $\left(B_{e}\right)_{e\in E(G)}$ be the partitions of $V(H)$ and $E(H)$, respectively.  Define $\xymatrix{V(H)\ar[r]^{q_{V}} & V(G)}\in\Set$ by $q_{V}(w):=v$, where $w\in A_{v}$.  Likewise, define\\ ${\xymatrix{E(H)\ar[r]^{q_{E}} & E(G)}\in\Set}$ by $q_{E}(f):=e$, where $f\in B_{e}$.  Both are well-defined, as $\left(A_{v}\right)_{v\in V(G)}$ and $\left(B_{e}\right)_{e\in E(G)}$ are partitions.  For $f\in E(H)$, there is $e\in E(G)$ such that $f\in B_{e}$.  Thus,
\[
\left(\sigma_{G}\circ q_{E}\right)(f)
=\sigma_{G}(e)
=\left(q_{V}\circ\sigma_{H}\right)(f)
\]
and
\[
\left(\tau_{G}\circ q_{E}\right)(f)
=\tau_{G}(e)
=\left(q_{V}\circ\tau_{H}\right)(f)
\]
as $\sigma_{H}(f)\in A_{\sigma_{G}(e)}$ and $\tau_{H}(f)\in A_{\tau_{G}(e)}$.  Hence, $q:=\left(q_{V},q_{E}\right)$ is a quiver homomorphism.  For $v\in V(G)$, choose $w_{v}\in A_{v}$.  For $e\in E(G)$, choose $f_{e}\in\edges_{H}\left(w_{\sigma_{G}(e)},w_{\tau_{G}(e)}\right)\cap B_{e}$.  Then,
\[\begin{array}{ccc}
q\left(w_{v}\right)=v,	&	q\left(f_{e}\right)=e,	&	\sigma_{H}\left(f_{e}\right)=w_{\sigma_{G}(e)},
\end{array}\]
and $\tau_{H}\left(f_{e}\right)=w_{\tau_{G}(e)}$.  By Proposition \ref{retraction}, $q$ is a retraction, and $G$ is a retract of $H$.

\end{proof}

In a general category, retracts have a pleasant relationship with injectivity from \cite[\S 9]{joyofcats}.  Given a category $\cat{C}$ and a class of morphisms $\Phi$, an object $A$ of $\cat{C}$ is $\Phi$-injective if and only if all retracts of $A$ are.  In the case of $\cat{C}=\Quiv$, if a blow-up of a quiver is $\Phi$-injective, then by Proposition \ref{blowup}, the original quiver must also be $\Phi$-injective.  Moreover, the converse of this statement about blow-ups is also true if $\Phi$ is a class of monomorphisms.

\begin{thm}\label{converse}
Let $G$ be a quiver and $\Phi$ a class of monic quiver maps.  The following are equivalent:
\begin{enumerate}
\item $G$ is $\Phi$-injective,
\item all blow-ups of $G$ are $\Phi$-injective,
\item a blow-up of $G$ is $\Phi$-injective.
\end{enumerate}
\end{thm}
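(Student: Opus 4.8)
The plan is to establish the cycle of implications $(1)\Rightarrow(2)\Rightarrow(3)\Rightarrow(1)$. The implication $(2)\Rightarrow(3)$ is immediate from Example \ref{reflexive}: a quiver is a blow-up of itself, so if all blow-ups are $\Phi$-injective, in particular $G$ itself is, which trivially furnishes ``a blow-up'' that is $\Phi$-injective. The implication $(3)\Rightarrow(1)$ is handled by the standard categorical fact quoted just before the theorem: if a blow-up $H$ of $G$ is $\Phi$-injective, then since $G$ is a retract of $H$ by Proposition \ref{blowup}, and $\Phi$-injectivity passes to retracts (using that $\Phi$ consists of monomorphisms, via \cite[\S 9]{joyofcats}), $G$ is $\Phi$-injective.

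The substantive content is therefore $(1)\Rightarrow(2)$: assuming $G$ is $\Phi$-injective, I must show an arbitrary blow-up $H$ of $G$ is $\Phi$-injective. Let $\xymatrix{X\ar[r]^{m} & Y}$ be a map in $\Phi$ and let $\xymatrix{X\ar[r]^{g} & H}$ be an arbitrary homomorphism; I need to extend $g$ along $m$ to a map $Y\to H$. The key idea is to pass through $G$ using the retraction. By Proposition \ref{blowup} there is a retraction $\xymatrix{H\ar[r]^{q} & G}$ with a section $\xymatrix{G\ar[r]^{j} & H}$ satisfying $q\circ j = id_{G}$. Compose to get $\xymatrix{X\ar[r]^{q\circ g} & G}$; since $G$ is $\Phi$-injective and $m\in\Phi$, there is $\xymatrix{Y\ar[r]^{h} & G}$ with $h\circ m = q\circ g$. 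Now the naive guess $j\circ h\colon Y\to H$ fails to extend $g$, because $j\circ h\circ m = j\circ q\circ g \neq g$ in general — the composite $j\circ q$ collapses each blow-up class to its chosen representative rather than being the identity on $H$.

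To repair this I will build the extension $\xymatrix{Y\ar[r]^{k} & H}$ by hand at the level of vertex and edge maps, using $h$ only to decide \emph{which blow-up class} each element of $Y$ lands in, and using $g$ to pin down the values on the image of $m$. Concretely: for $u\in V(Y)$, if $u = m(x)$ for some $x\in V(X)$ set $k(u) := g(x)$ (well-defined since $m$ is monic on vertices), and otherwise pick any vertex of $H$ lying in the class $A_{h(u)}$; for an edge $d\in E(Y)$, if $d = m(c)$ set $k(d) := g(c)$, and otherwise — knowing $h(d)\in E(G)$ with source and target dictated by $h$ applied to $\sigma_Y(d),\tau_Y(d)$, and knowing the $A$-classes assigned to those endpoints by the previous step — invoke blow-up axiom (3) to choose an edge of $H$ in $B_{h(d)}$ with the prescribed source and target in $H$. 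The compatibility of $k$ with $\sigma$ and $\tau$ on edges of the form $m(c)$ follows because $g$ is a homomorphism and $m$ is one; on the remaining edges it follows from the way the source/target were selected via axioms (1)--(3) of a blow-up; one must check the two cases (an edge $m(c)$ whose endpoint is not in $\Ran(m_V)$ cannot occur since $m$ is a homomorphism, so the cases are consistent). The main obstacle is precisely this bookkeeping: ensuring the piecewise definition of $k$ is globally well-defined and a genuine quiver homomorphism, i.e. that the choices forced on $\Ran(m)$ by $g$ are consistent with the class-assignments forced by $h$ — this works because $q\circ g = h\circ m$ guarantees $q(g(x)) = h(m(x))$, so $g(x)\in A_{h(m(x))}$, exactly matching the class $h$ would assign. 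Once $k$ is verified to be a homomorphism with $k\circ m = g$, we conclude $H$ is $\Phi$-injective, completing $(1)\Rightarrow(2)$ and hence the theorem.
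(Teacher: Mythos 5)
Your proposal is correct and follows essentially the same route as the paper: establish $(2)\Rightarrow(3)\Rightarrow(1)$ via reflexivity and the retraction of Proposition \ref{blowup}, and prove $(1)\Rightarrow(2)$ by extending $q\circ g$ through the $\Phi$-injectivity of $G$ and then lifting back to $H$ piecewise, using monicity of $m$ to fix values on its image and blow-up axiom (3) together with the compatibility $g(x)\in A_{h(m(x))}$ to fill in the rest. The bookkeeping you outline is exactly the verification carried out in the paper's proof.
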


\begin{proof}

$(2\Rightarrow 3)$ If all blow-ups of $G$ are $\Phi$-injective, then a single blow-up is.

$(3\Rightarrow 1)$ By Proposition \ref{blowup}, $G$ is a retract of any blow-up, so $G$ becomes $\Phi$-injective by the general theory.

$(1\Rightarrow 2)$ Let $H$ be a blow-up of $G$, equipped with partitions $\left(A_{v}\right)_{v\in V(G)}$ and $\left(B_{e}\right)_{e\in E(G)}$ of $V(H)$ and $E(H)$, respectively.  Given $\xymatrix{D\ar[r]^{\phi} & C}\in\Phi$ and $\xymatrix{D\ar[r]^{\psi} & H}\in\Quiv$, the goal is to construct a map $\xymatrix{C\ar[r]^{\tilde{\psi}} & H}\in\Quiv$ such that $\psi=\tilde{\psi}\circ\phi$.  Let $\xymatrix{H\ar[r]^{q} & G}\in\Quiv$ be the retraction constructed in Proposition \ref{blowup}.
\[\xymatrix{
H\ar@{->>}[r]^{q}	&	G\\
D\textrm{ }\ar@{>->}[r]_{\phi}\ar[u]^{\psi}	&	C\\
}\]
As $G$ is $\Phi$-injective, there is $\xymatrix{C\ar[r]^{\hat{\psi}} & G}\in\Quiv$ such that $q\circ\psi=\hat{\psi}\circ\phi$.
\[\xymatrix{
H\ar@{->>}[r]^{q}	&	G\\
D\textrm{ }\ar@{>->}[r]_{\phi}\ar[u]^{\psi}	&	C\ar@{..>}[u]_{\exists\hat{\psi}}\\
}\]

To construct the quiver map from $C$ to $H$, vertices and edges will be chosen much like in Proposition \ref{blowup}.  First, the vertices of $C$ will be handled.  For $v\in\Ran(\phi)$, there is a unique $a_{v}\in V(D)$ such that $v=\phi\left(a_{v}\right),$ since $\phi$ is monic.  Let $w_{v}:=\psi\left(a_{v}\right)$ and observe that
\[
q\left(w_{v}\right)
=(q\circ\psi)\left(a_{v}\right)
=\left(\hat{\psi}\circ\phi\right)\left(a_{v}\right)
=\hat{\psi}(v).
\]
Thus, $w_{v}\in A_{\hat{\psi}(v)}$.  For $v\not\in\Ran(\phi)$, choose $w_{v}\in A_{\hat{\psi}(v)}$ arbitrarily.

Next, consider the edges of $C$.  For $e\in\Ran(\phi)$, there is a unique $b_{e}\in E(D)$ such that $e=\phi\left(b_{e}\right),$ since $\phi$ is monic.  Let $f_{e}:=\psi\left(b_{e}\right)$ and observe that
\[
q\left(f_{e}\right)
=(q\circ\psi)\left(b_{e}\right)
=\left(\hat{\psi}\circ\phi\right)\left(b_{e}\right)
=\hat{\psi}(e),
\]
\[
\sigma_{H}\left(f_{e}\right)
=\left(\sigma_{H}\circ \psi\right)\left(b_{e}\right)
=\left(\psi\circ\sigma_{D}\right)\left(b_{e}\right)
=\psi\left(a_{\sigma_{C}(e)}\right)
=w_{\sigma_{C}(e)},
\]
and $\tau_{H}\left(f_{e}\right)=w_{\tau_{C}(e)}$ by a similar calculation since $\phi$ is monic.  Thus,\\ ${f_{e}\in\edges_{H}\left(w_{\sigma_{C}(e)},w_{\tau_{C}(e)}\right)\cap B_{\hat{\psi}(e)}}$.  For $e\not\in\Ran(\phi)$, choose\\ ${f_{e}\in\edges_{H}\left(w_{\sigma_{C}(e)},w_{\tau_{C}(e)}\right)\cap B_{\hat{\psi}(e)}}$ arbitrarily.

Define $\xymatrix{V(C)\ar[r]^{\alpha} & V(H)}\in\Set$ by $\alpha(v):=w_{v}$ and $\xymatrix{E(C)\ar[r]^{\beta} & E(H)}\in\Set$ by $\beta(e):=f_{e}$.  For $e\in E(C)$,
\[
\left(\sigma_{H}\circ\beta\right)(e)
=\sigma_{H}\left(f_{e}\right)
=w_{\sigma_{C}(e)}
=\left(\alpha\circ\sigma_{C}\right)(e)
\]
and
\[
\left(\tau_{H}\circ\beta\right)(e)
=\tau_{H}\left(f_{e}\right)
=w_{\tau_{C}(e)}
=\left(\alpha\circ\tau_{C}\right)(e)
\]
Thus, $\tilde{\psi}:=\left(\alpha,\beta\right)$ is a quiver homomorphism.
\[\xymatrix{
H\ar@{->>}[r]^{q}	&	G\\
D\textrm{ }\ar@{>->}[r]_{\phi}\ar[u]^{\psi}	&	C\ar@{->}[u]_{\hat{\psi}}\ar@{..>}[ul]_{\tilde{\psi}}\\
}\]
Notice that $\tilde{\psi}\circ\phi=\psi$ by design.  Since $\phi$ was arbitrary, $H$ is $\Phi$-injective.

\end{proof}

\begin{ex}
Notice that the bouquet $C_{1}$ is a terminal object in $\Quiv$.  Consequently, it is injective with respect to any class of quiver morphisms, in particular any class $\Phi$ of monomorphisms.  By Example \ref{loadedbouquets} and Theorem \ref{converse}, any nonempty loaded quiver is $\Phi$-injective.  The content of \cite[Proposition 3.2.1]{grilliette3} is that when $\Phi$ is the class of all monic maps, the injectivity class contains no other members.
\end{ex}

Similarly, abstract projectivity classes are closed under retractions, but the dual statement of Theorem \ref{converse} is not true.

\begin{ex}\label{projectivefail}
By \cite[Proposition 4.1.1]{grilliette3}, the directed path of order-2, $P_{2}$, is projective with respect to all quiver epimorphisms.  However, the following quiver is a blow-up of $P_{2}$, but not epi-projective.
\[\xymatrix{
\bullet\ar[r]\ar[dr]	&	\bullet\\
\bullet\ar[r]\ar[ur]	&	\bullet\\
}\]
\end{ex}

\section{Definition and Abstract Relationships}

The remainder of this paper will consider some particular cases of quiver injectivity, where the concept of a blow-up will become quite telling and useful.

For an integer $n\geq 2$, let $P_{n}$ be the directed path of order $n$ and $C_{n}$ the directed cycle of order $n$.  In this section, $P_{n}$ will be embedded into $C_{n}$ by connecting the two endpoints of the path in the obvious way.  This embedding will be denoted as the quiver monomorphism $\phi_{n}:P_{n}\to C_{n}$.  This situation is drawn below.
\[\begin{array}{ccc}
P_{n}	&	\to	&	C_{n}\\
\hline
\xymatrix{a_{1}\ar[r]^{x_{1}} & a_{2}\ar[dr]^{x_{2}}\\ & & \cdots\ar[dl]^{x_{n-2}}\\ a_{n} & a_{n-1}\ar[l]^{x_{n-1}}\\ }
&	&
\xymatrix{a_{1}\ar[r]^{x_{1}} & a_{2}\ar[dr]^{x_{2}}\\ & & \cdots\ar[dl]^{x_{n-2}}\\ a_{n}\ar[uu]^{x_{n}} & a_{n-1}\ar[l]^{x_{n-1}}\\ }\\
\end{array}\]

The goal will be to describe all quivers which are $\phi_{n}$-injective.  Much like \cite[Proposition 3.2.1]{grilliette3}, the property of being $\phi_{n}$-injective can be recovered with only a condition on the quiver itself.

\begin{prop}[Characterization of $\phi_{n}$-injectivity]\label{ncyclic}
A quiver $J$ is $\phi_{n}$-injective if and only if for any walk of length $n-1$, there is an edge from the terminal vertex to the initial vertex.
\end{prop}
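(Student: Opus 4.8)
The plan is to unwind the definition of $\phi_n$-injectivity directly, exploiting the fact that a quiver homomorphism out of $P_n$ is exactly the data of a walk of length $n-1$. Recall that $P_n$ has vertices $a_1,\dots,a_n$ and edges $x_i\colon a_i\to a_{i+1}$ for $1\le i\le n-1$, while $C_n$ adds one further edge $x_n\colon a_n\to a_1$, and $\phi_n$ is the inclusion, acting as the identity on all vertices and on $x_1,\dots,x_{n-1}$. Hence a homomorphism $\psi\colon P_n\to J$ is precisely a choice of vertices $v_i:=\psi(a_i)$ and edges $e_i:=\psi(x_i)$ with $\sigma_J(e_i)=v_i$ and $\tau_J(e_i)=v_{i+1}$, that is, a walk $v_1\xrightarrow{e_1}v_2\to\cdots\xrightarrow{e_{n-1}}v_n$ of length $n-1$ in $J$. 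An extension $\tilde\psi\colon C_n\to J$ of $\psi$ along $\phi_n$ must agree with $\psi$ on every vertex and on $x_1,\dots,x_{n-1}$, and the homomorphism conditions say exactly that $\tilde\psi(x_n)$ is forced to be an edge with source $v_n$ and target $v_1$; so such an extension exists if and only if $\edges_J(v_n,v_1)\neq\emptyset$.

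For the forward direction I would take an arbitrary walk of length $n-1$, package it as a homomorphism $\psi\colon P_n\to J$ as above (the source/target equations hold because the $e_i$ form a walk), apply $\phi_n$-injectivity to obtain $\tilde\psi\colon C_n\to J$ with $\tilde\psi\circ\phi_n=\psi$, and read off that $\tilde\psi(x_n)$ is the desired edge from the terminal vertex $v_n$ back to the initial vertex $v_1$. For the converse, given any $\psi\colon P_n\to J$, I would extract the associated walk $v_1\to\cdots\to v_n$, invoke the hypothesis to get an edge $g$ from $v_n$ to $v_1$, and define $\tilde\psi$ to agree with $\psi$ on $a_1,\dots,a_n,x_1,\dots,x_{n-1}$ and to send $x_n\mapsto g$. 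Checking that $\tilde\psi$ is a homomorphism reduces to the new relations $\sigma_J(g)=v_n$ and $\tau_J(g)=v_1$, and $\tilde\psi\circ\phi_n=\psi$ is immediate from the construction; since $\psi$ was arbitrary, $J$ is $\phi_n$-injective.

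The argument is essentially bookkeeping; the one point deserving explicit attention is the observation that $\phi_n$ is bijective on vertices and hits every edge of $C_n$ except $x_n$, so that any extension is uniquely determined off $x_n$ and the sole freedom, as well as the sole obstruction, is the choice of a closing edge from $v_n$ to $v_1$. (Monicity of $\phi_n$ is not needed for this proposition, though it is what makes Theorem~\ref{converse} applicable to $\phi_n$ later.) This structural remark is the step I would foreground, as everything else follows by matching the quiver homomorphism axioms to the definition of a walk.
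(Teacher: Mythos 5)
Your proposal is correct and follows essentially the same route as the paper: both directions identify a homomorphism $P_{n}\to J$ with a walk of length $n-1$, use $\phi_{n}$-injectivity to extend to $C_{n}$ and read off the closing edge for the forward direction, and explicitly build the extension $C_{n}\to J$ from the hypothesized edge for the converse. Your added remark that $\phi_{n}$ is bijective on vertices and on all edges except $x_{n}$, so the only freedom and only obstruction is the closing edge, is a fine articulation of what the paper leaves as a ``routine check.''
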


\begin{proof}
($\Rightarrow$) Let $\left(v_{j}\right)_{j=1}^{n}\subseteq V(J)$ with $e_{j}\in\edges_{J}\left(v_{j},v_{j+1}\right)$ for $1\leq j\leq n-1$.  Define $\psi_{V}:V\left(P_{n}\right)\to V(J)$ and $\psi_{E}:E\left(P_{n}\right)\to E(J)$ by
\[\begin{array}{ccc}
a_{j}\mapsto v_{j}
&	\textrm{and}	&
x_{k}\mapsto e_{k},\\
\end{array}\]
for all $1\leq j\leq n$ and $1\leq k\leq n-1$.  A routine check shows that $\psi:=\left(\psi_{V},\psi_{E}\right)$ is a quiver map from $P_{n}$ to $J$.  As $J$ is $\phi_{n}$-injective, there is a quiver map $\hat{\psi}:C_{n}\to J$ such that $\hat{\psi}\circ\phi_{n}=\psi$.  Let $e_{n}:=\hat{\psi}\left(x_{n}\right)$.  A calculation shows that $\sigma_{J}\left(e_{n}\right)=v_{n}$ and $\tau_{J}\left(e_{n}\right)=v_{1}$.

($\Leftarrow$) Given $\psi:P_{n}\to J$, let
\[\begin{array}{ccc}
v_{j}:=\psi\left(a_{j}\right)
&	\textrm{and}	&
e_{k}:=\psi\left(x_{k}\right),\\
\end{array}\]
for all $1\leq j\leq n$ and $1\leq k\leq n-1$.  A calculation shows that $e_{j}\in\edges_{J}\left(v_{j},v_{j+1}\right)$ for $1\leq j\leq n-1$.  By assumption, there is $e_{n}\in\edges_{J}\left(v_{n},v_{1}\right)$.  Define $\hat{\psi}_{V}:V\left(C_{n}\right)\to V(J)$ and $\hat{\psi}_{E}:E\left(C_{n}\right)\to E(J)$ by
\[\begin{array}{ccc}
a_{j}\mapsto v_{j}
&	\textrm{and}	&
x_{j}\mapsto e_{j},\\
\end{array}\]
for all $1\leq j\leq n$.  A routine check shows that $\hat{\psi}:=\left(\hat{\psi}_{V},\hat{\psi}_{E}\right)$ is a quiver map from $C_{n}$ to $J$ and that $\hat{\psi}\circ\phi=\psi$.
\end{proof}

\begin{ex}[$\phi_{n}$-injectivity of some quivers] Using this criterion, some quivers can immediately be shown to be $\phi_{n}$-injective or not.
\begin{enumerate}
\item Any nonempty loaded quiver is mono-injective by \cite[Proposition 3.2.1]{grilliette3} and, therefore, trivially $\phi_{n}$-injective for all $n\geq 2$.
\item For $n,m\geq 2$, $P_{m}$ is $\phi_{n}$-injective if and only if $n>m$.
\item For any $n,m\geq 2$, $C_{m}$ is $\phi_{n}$-injective if and only if $m\mid n$.
\end{enumerate}
\end{ex}

By \cite[Proposition 10.40]{joyofcats}, abstract injectivity classes are closed on the categorical product, which is no less true here.  In most algebraic settings, a product object is injective if and only if all of its factors are.  However, this is not the case for $\phi_{n}$-injectivity.

\begin{ex}[Failure of products for $\phi_{n}$-injectivity]
Observe that $P_{2}\prod P_{m}$ is $\phi_{n}$-injective for $n\geq 3$ and $m\geq 2$.
\[\begin{array}{c|c}
&	\xymatrix{\bullet\ar[r]	&	\bullet\ar[r]	&	\bullet\ar[r]	&	\cdots\ar[r]	&	\bullet}\\
\hline
\xymatrix{\bullet\ar[dd]\\ \\ \bullet}	&	\xymatrix{\bullet\ar[ddr]	&	\bullet\ar[ddr]	&	\bullet\ar[ddr]	&	 \bullet\ar[ddr]	&	\bullet\\
&	&	&	\cdots\\
\bullet	&	\bullet	&	\bullet	&	\bullet	&	\bullet\\}\\
\end{array}\]
Let $I_{m}$ denote the independent set of vertices of order-$m$.  Then, $I_{1}\prod P_{m}\cong I_{m}$ is $\phi_{2}$-injective.
\[\begin{array}{c|c}
&	\xymatrix{\bullet\ar[r]	&	\bullet\ar[r]	&	\bullet\ar[r]	&	\cdots\ar[r]	&	\bullet}\\
\hline
\bullet	&	\xymatrix{\bullet	&	\bullet	&	\bullet	&	\cdots	&	\bullet}\\
\end{array}\]
\end{ex}

On the other hand, these particular injectivity classes have a very tight relationship with the coproduct, the disjoint union, which can be proven quickly with Proposition \ref{ncyclic}.  This result is due to the weak connectivity of $P_{n}$ and $C_{n}$.

\begin{prop}[Disjoint unions and $\phi_{n}$-injectivity]\label{ncyclic-union}
Let $\Lambda$ be an index set and $J_{\lambda}$ a quiver for each $\lambda\in\Lambda$.  Then, $\coprod_{\lambda\in\Lambda}J_{\lambda}$ is $\phi_{n}$-injective if and only if $J_{\lambda}$ is $\phi_{n}$-injective for each $\lambda\in\Lambda$.
\end{prop}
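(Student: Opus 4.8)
The plan is to use the characterization of $\phi_n$-injectivity from Proposition \ref{ncyclic}, which reduces the statement to a purely combinatorial fact about walks of length $n-1$. The key observation is that $P_n$ and $C_n$ are both weakly connected, so any walk in $\coprod_{\lambda\in\Lambda}J_\lambda$ lies entirely within a single summand $J_\lambda$; the coproduct in $\Quiv$ is the componentwise disjoint union of vertex and edge sets with the evident source and target maps, so an edge of one summand never has an endpoint in another. This is the geometric content behind the remark in the excerpt that the result is ``due to the weak connectivity of $P_n$ and $C_n$.''

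For the forward direction ($\Leftarrow$), I would assume each $J_\lambda$ is $\phi_n$-injective and take an arbitrary walk $v_1 \xrightarrow{e_1} v_2 \xrightarrow{e_2} \cdots \xrightarrow{e_{n-1}} v_n$ of length $n-1$ in $\coprod_{\lambda}J_\lambda$. Since $e_1$ is an edge of the coproduct, it belongs to $E(J_{\lambda_0})$ for exactly one $\lambda_0$, forcing $v_1 = \sigma(e_1)$ and $v_2 = \tau(e_1)$ into $V(J_{\lambda_0})$; inductively every $e_j$ and every $v_j$ lies in $J_{\lambda_0}$, so the whole walk is a walk of length $n-1$ in $J_{\lambda_0}$. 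By Proposition \ref{ncyclic} applied to $J_{\lambda_0}$, there is an edge $e_n \in \edges_{J_{\lambda_0}}(v_n, v_1)$, and the canonical injection $J_{\lambda_0}\hookrightarrow\coprod_\lambda J_\lambda$ carries $e_n$ to an edge from $v_n$ to $v_1$ in the coproduct. Hence $\coprod_\lambda J_\lambda$ is $\phi_n$-injective by Proposition \ref{ncyclic}.

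For the reverse direction ($\Rightarrow$), I would fix $\mu\in\Lambda$, assume the coproduct is $\phi_n$-injective, and take a walk of length $n-1$ in $J_\mu$. Pushing it forward along the injection $J_\mu\hookrightarrow\coprod_\lambda J_\lambda$ gives a walk of length $n-1$ in the coproduct, which by Proposition \ref{ncyclic} admits a closing edge $e_n$ from the terminal vertex back to the initial vertex. That edge lives in $E(J_\mu)$ — since both its source and target are vertices of $J_\mu$ and edge sets of distinct summands are disjoint with endpoints in the corresponding vertex sets — so it is the desired closing edge inside $J_\mu$. Applying Proposition \ref{ncyclic} again, $J_\mu$ is $\phi_n$-injective, and since $\mu$ was arbitrary we are done. (Alternatively, the reverse direction follows from the general fact that each $J_\mu$ is a retract of the coproduct via the injection and a fold map collapsing the other summands, combined with closure of injectivity classes under retracts; but the direct argument via Proposition \ref{ncyclic} is shorter.)

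The only real subtlety — and it is a mild one — is verifying carefully that a walk in the coproduct cannot ``jump'' between summands; this rests on the explicit description of coproducts in $\Quiv$ (componentwise disjoint union in $\Set$, which is a cocomplete category) and an easy induction on walk length. Everything else is a direct invocation of Proposition \ref{ncyclic}.
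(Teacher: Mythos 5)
Your proposal is correct and follows essentially the same route as the paper: both directions are reduced to the walk criterion of Proposition \ref{ncyclic}, using the fact that a walk in the disjoint union stays within a single summand (by induction starting from the first edge) and that a closing edge between two vertices of a summand must itself lie in that summand. The parenthetical retract argument for the forward direction is a fine alternative, but the main argument matches the paper's proof.
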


\begin{proof}
Let $J:=\coprod_{\lambda\in\Lambda}J_{\lambda}$, and regard each $J_{\lambda}$ as a subquiver of $J$.

($\Rightarrow$) Fix $\lambda\in\Lambda$.  Let $\left(v_{j}\right)_{j=1}^{n}\subseteq V\left(J_{\lambda}\right)$ with $e_{j}\in\edges_{J_{\lambda}}\left(v_{j},v_{j+1}\right)$ for $1\leq j\leq n-1$.  Since $J$ is $\phi_{n}$-injective and $J_{\lambda}$ is a subquiver of $J$, there must be $e_{n}\in\edges_{J}\left(v_{n},v_{1}\right)$ by Proposition \ref{ncyclic}.  As a disjoint union, the only edges in $J$ between $v_{n}$ and $v_{1}$ arise from $J_{\lambda}$, forcing $e_{n}\in\edges_{J_{\lambda}}\left(v_{n},v_{1}\right)$.  Proposition \ref{ncyclic} states that $J_{\lambda}$ must be $\phi_{n}$-injective.

($\Leftarrow$) Let $\left(v_{j}\right)_{j=1}^{n}\subseteq V(J)$ with $e_{j}\in\edges_{J}\left(v_{j},v_{j+1}\right)$ for $1\leq j\leq n-1$.  As a disjoint union, $e_{1}$ arises from some $J_{\lambda}$, forcing that $v_{1}$ and $v_{2}$ must also be from the same $J_{\lambda}$.  By induction, $e_{k}$ and $v_{j}$ must be from $J_{\lambda}$ also for all $1\leq j\leq n$ and $1\leq k\leq n-1$.  Since $J_{\lambda}$ is $\phi_{n}$-injective, there is $e_{n}\in\edges_{J_{\lambda}}\left(v_{n},v_{1}\right)$, which also exists in $J$.
\end{proof}

By this result, one need only consider the weakly connected components of a quiver to determine $\phi_{n}$-injectivity.

\section{Trivial Cases}

Looking at the examples thus far, most quivers have been $\phi_{n}$-injective for trivial reasons, either being loaded or having no walk of length $n-1$.  The latter property can be used to completely describe the structure of the quiver.

\begin{prop}[No walk of length $n$]\label{nowalk}
If $J$ is a quiver with no walk of length $n$, then $J$ consists of $n-1$ sets of vertices $B_{1}, \ldots, B_{n-1}$, where every edge of $J$ goes from a vertex in $B_{i}$ to a vertex in $B_{j}$ for some $i < j$.
\end{prop}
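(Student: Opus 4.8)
The plan is to stratify $V(J)$ by how far along a walk a vertex can lie, and to read off the required layering from this stratification. For each vertex $v\in V(J)$ I would define its \emph{height} $\ell(v)$ to be the largest number of edges appearing in any walk of $J$ that terminates at $v$. What makes this definition legitimate is precisely the hypothesis: since $J$ has no walk of length $n$, every walk uses at most $n-1$ edges, so the set of lengths of walks ending at $v$ is nonempty (it contains the trivial walk of length $0$) and bounded above by $n-1$, and therefore has a maximum. The bound $n-1$ is uniform in $v$, which matters because the definition of a quiver allows $V(J)$ and $E(J)$ to be arbitrary, possibly infinite, sets; I cannot invoke finiteness of $J$, so the uniform bound supplied by the hypothesis is doing all the work.

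The key steps, in order, would be the following. First, as just noted, the hypothesis makes $\ell(v)\in\{0,1,\ldots,n-1\}$ a well-defined nonnegative integer for every $v$. Second, $\ell$ strictly increases across every edge: if $e$ is an edge with $\sigma_{J}(e)=u$ and $\tau_{J}(e)=v$, then appending $e$ to a longest walk ending at $u$ produces a walk ending at $v$ of length $\ell(u)+1$, whence $\ell(v)\ge\ell(u)+1>\ell(u)$. Third, I would group the vertices into the fibers of $\ell$, declaring the block of a vertex to be determined by its height; step two then says that every edge passes from a block of strictly smaller height to a block of strictly larger height, which is exactly the forward property demanded of the $B_{i}$. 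As an automatic by-product, $\ell$ strictly increasing rules out any directed cycle (a cycle would return $\ell$ to its starting value while strictly increasing it), so $J$ is acyclic, as one expects.

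The one genuinely delicate point, and the step I expect to be the main obstacle, is matching the number of blocks to the asserted value $n-1$ and enumerating them as $B_{1},\ldots,B_{n-1}$. The heights produced above lie a priori in the $n$-element range $\{0,1,\ldots,n-1\}$, and the strict-increase property forces the blocks met by any walk to carry strictly increasing indices; confirming that the fibers can be indexed within $1,\ldots,n-1$ (taking any unused blocks to be empty) requires reading \emph{length} consistently in terms of edges rather than vertices, and the count is sharp, since the embedded path $P_{n}$ already realizes a longest admissible walk and forces a full chain of heights to occur. Once the range of $\ell$ is pinned down, assigning $B_{i}$ to the appropriate fiber and padding with empty sets exhibits $J$ as the union of exactly $B_{1},\ldots,B_{n-1}$ with every edge directed forward, completing the argument; steps one through three, which are the structural heart, I expect to be routine.
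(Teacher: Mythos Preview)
Your approach coincides with the paper's: both assign to each vertex the maximum length of a walk (equivalently, in an acyclic quiver, a path) terminating there, place $v$ in the block indexed by that value, and then observe that an edge from $u$ to $v$ forces a strict increase of index because one can append the edge to a longest walk ending at $u$. Your worry about squeezing the blocks into $\{1,\ldots,n-1\}$ rather than an $n$-element range is well founded and is glossed over in the paper's proof too; under the edge-count convention for ``length'' used elsewhere in the paper the heights naturally fill $\{0,\ldots,n-1\}$, so the discrepancy you flag is an off-by-one slip in the statement rather than a gap in your argument.
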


\begin{proof}
Let $B_1$ be the set of all sources of $J$.   If $J$ has no walk of length $n$, then $J$ has no cycles, and hence $B_1$ is non-empty if $J$ is non-empty.  For every vertex $v \notin B_1$, place $v \in B_i$ if $i$ is the maximum length of a path ending with $v$.

Hence all that needs to be shown is that every edge goes from a vertex $B_i$ to a vertex $B_j$ for $i < j$.  Suppose there is an edge from $u \in B_i$ to $v \in B_j$.   By definition, there is a path of length $i$ ending on $u$.  By taking the edge $u \to v$, this means there is a path of length $i+1$ ending on $v$.  Since $j$ is the length of the longest path ending on $v$, it must be that $j \geq i+1 > i$, as desired.
\end{proof}

For $n\geq 3$, the loaded case can be identified readily by one characteristic feature, having a loop at some vertex.  This fact will be proven incrementally over three lemmas.  First, an edge terminating in a loop spawns a copy of the full quiver on two vertices.

\begin{lem}[Propagating loops]\label{propagation}
For $n\geq 3$, suppose $J$ is a $\phi_{n}$-injective quiver.  If $J$ has a walk of length 2, which has distinct endpoints and either starts or ends with a loop, then there is an edge in reverse and a loop on the opposite vertex.
\end{lem}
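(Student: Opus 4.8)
The plan is to use the loop as a buffer: traversing it repeatedly stretches the given length-$2$ walk into a walk of length $n-1$, to which Proposition~\ref{ncyclic} applies, and repeating this once more produces the loop on the far vertex. Since $P_{n}$ and $C_{n}$ are each isomorphic to their edge-reversals, the map $\phi_{n}$ is self-dual, so $J$ is $\phi_{n}$-injective if and only if the edge-reversal $J^{\mathrm{op}}$ is; reversing all edges also converts a length-$2$ walk that \emph{ends} with a loop into one that \emph{starts} with a loop. Hence it suffices to treat the case where the walk begins with a loop, say $v\xrightarrow{\ell}v\xrightarrow{e}w$ with $\ell$ a loop at $v$ and $v\neq w$; the goal is then to produce an edge from $w$ to $v$ together with a loop at $w$.

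For the first application, traverse $\ell$ exactly $n-2$ times and then follow $e$. This is a walk of length $n-1$ from $v$ to $w$, so, as $J$ is $\phi_{n}$-injective, Proposition~\ref{ncyclic} yields an edge $f$ from $w$ to $v$, which is the desired reverse edge.

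For the second application, traverse $f$, then $\ell$ exactly $n-3$ times, then $e$. Since $n\geq 3$ this is legitimate --- when $n=3$ the middle stretch is empty and the walk is simply $w\xrightarrow{f}v\xrightarrow{e}w$ --- and in every case it is a walk of length $n-1$ from $w$ back to $w$. Proposition~\ref{ncyclic} then supplies an edge from $w$ to $w$, i.e.\ a loop at $w$, which completes the proof. The whole argument is essentially bookkeeping; the only points that need care are the degenerate case $n=3$ in the second step and checking that the edge-reversal reduction truly handles the ``ends with a loop'' alternative, but in lieu of the latter one may just repeat the two constructions above with the roles of the loop vertex $v$ and the far vertex $w$ interchanged, which is completely parallel.
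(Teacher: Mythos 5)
Your proof is correct and follows essentially the same route as the paper's: two applications of Proposition~\ref{ncyclic} to walks of length $n-1$ obtained by padding the given length-$2$ walk with repetitions of the loop (first producing the reverse edge, then the loop on the opposite vertex), with the other orientation handled by duality or by interchanging the roles of the two vertices. The only difference is that you treat the ``starts with a loop'' case directly while the paper treats the ``ends with a loop'' case, which is immaterial.
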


\begin{proof}
Say the following is a subquiver of $J$.
\[\xymatrix{
u\ar[r]^{g}	&	w\ar@(ur,dr)^{f}\\
}\]
Then, consider the walk given by
\[\begin{array}{ccc}
\begin{array}{rcl}
v_{1}	&	:=	&	u,\\
v_{j}	&	:=	&	w,\\
\end{array}
&	\textrm{and}	&
\begin{array}{rcl}
e_{1}	&	:=	&	g,\\
e_{k}	&	:=	&	f,\\
\end{array}
\end{array}\]
for $2\leq j\leq n$ and $2\leq k\leq n-1$.  By Proposition \ref{ncyclic}, there is $h\in\edges_{J}(w,u)$.
\[\xymatrix{
u\ar@/^/[r]^{g}	&	w\ar@(ur,dr)^{f}\ar@{..>}@/^/[l]^{h}\\
}\]
Now, consider the walk given by
\[\begin{array}{ccc}
\begin{array}{rcl}
v_{1}:=v_{n}	&	:=	&	u,\\
v_{j}	&	:=	&	w,\\
\end{array}
&	\textrm{and}	&
\begin{array}{rcl}
e_{1}	&	:=	&	g,\\
e_{k}	&	:=	&	f,\\
e_{n-1}	&	:=	&	h,\\
\end{array}
\end{array}\]
for $2\leq j\leq n-1$ and $2\leq k\leq n-2$.  By Proposition \ref{ncyclic}, there is $d\in\edges_{J}(u,u)$.
\[\xymatrix{
u\ar@/^/[r]^{g}\ar@{..>}@(dl,ul)^{d}	&	w\ar@(ur,dr)^{f}\ar@/^/[l]^{h}\\
}\]
If $g$ was reversed, the subquiver appears between $u$ and $w$ by dualizing the proof.

\end{proof}

Next, two consecutive edges give rise to a copy of the full quiver on three vertices, allowing the quiver to be triangulated.

\begin{lem}[Triangulation]\label{triangulation}
For $n\geq 3$, suppose $J$ is a $\phi_{n}$-injective quiver.  If $J$ has two consecutive edges ending with a loop, then an edge exists between each pair of vertices involved.
\end{lem}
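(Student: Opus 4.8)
The plan is to reduce everything to two applications of Lemma \ref{propagation} followed by two applications of Proposition \ref{ncyclic}. Write the hypothesized configuration as a subquiver $u\xrightarrow{g_1}v\xrightarrow{g_2}w$ together with a loop $\ell$ at $w$, so that the vertices involved are $u$, $v$, $w$. If $u$, $v$, $w$ are not pairwise distinct, the picture collapses to a single edge into a looped vertex, which is exactly the hypothesis of Lemma \ref{propagation}; I would dispose of those degenerate cases at the outset and thereafter assume $u$, $v$, $w$ distinct.

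First I would propagate the loop backwards along the two given edges. Apply Lemma \ref{propagation} to the length-$2$ walk $v\xrightarrow{g_2}w\xrightarrow{\ell}w$: its endpoints $v$ and $w$ are distinct and it ends with a loop, so this yields an edge $h_2\in\edges_J(w,v)$ and a loop at $v$. Feeding this new loop at $v$ back in, Lemma \ref{propagation} applied to $u\xrightarrow{g_1}v\to v$ yields an edge $h_1\in\edges_J(v,u)$ and a loop at $u$. After this step $J$ contains loops at all of $u$, $v$, $w$ together with edges in both directions between $u$ and $v$ and between $v$ and $w$; only the pair $\{u,w\}$ remains to be joined.

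For that, I would build length-$(n-1)$ walks by padding with the loops just produced and invoke Proposition \ref{ncyclic}. Since $n\geq 3$, concatenating the loop at $u$ taken $n-3$ times with $g_1$ and then $g_2$ is a walk of length $n-1$ from $u$ to $w$, so Proposition \ref{ncyclic} forces an edge $w\to u$. Symmetrically, the loop at $w$ taken $n-3$ times followed by $h_2$ and then $h_1$ is a walk of length $n-1$ from $w$ to $u$, forcing an edge $u\to w$. This exhibits an edge between every pair of the vertices $u$, $v$, $w$ (indeed in both directions, with a loop on each), which is the desired conclusion.

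I do not expect a serious obstacle here; the content is bookkeeping. The only points needing care are verifying the distinctness hypothesis of Lemma \ref{propagation} before each of its two invocations (handled by the reduction at the start) and the boundary case $n=3$, where $n-3=0$ so the padded walks in the last step degenerate to the bare walks $u\to v\to w$ and $w\to v\to u$, and Proposition \ref{ncyclic} is applied directly to those.
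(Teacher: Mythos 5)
Your proof is correct and is essentially the paper's argument: two applications of Lemma \ref{propagation} to install loops and 2-cycles at all three vertices, followed by loop-padded walks of length $n-1$ fed to Proposition \ref{ncyclic} to join the two end vertices (the paper pads with $n-3$ copies of the middle vertex's loop between the two edges, you pad with the endpoint loops --- an immaterial difference). The only caveat is that the paper allows the two consecutive edges to be oriented arbitrarily, whereas you fix the orientation $u\to v\to w$; since Lemma \ref{propagation} applies to an edge in either direction and your propagation step produces 2-cycles and loops regardless, your argument covers the general case without change.
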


\begin{proof}

Say the following subquiver exists in $J$,
\[\xymatrix{
w\ar@{-}[r]	&	x\ar@{-}[r]	&	y\ar@(ur,dr)\\
}\]
where the two edges can be oriented in either direction.  By Lemma \ref{propagation}, each edge becomes a 2-cycle with a loop at each vertex.
\[\xymatrix{
\\
w\ar@(dl,ul)\ar@/^/[r]^{a_{1}}	&	x\ar@(dr,dl)^{b}\ar@/^/[r]^{c_{1}}\ar@/^/[l]^{a_{2}}	&	 y\ar@(ur,dr)\ar@/^/[l]^{c_{2}}\\
\\
}\]
Consider the walk given by
\[\begin{array}{ccc}
\begin{array}{rcl}
v_{1}	&	:=	&	w,\\
v_{j}	&	:=	&	x,\\
v_{n}	&	:=	&	y,\\
\end{array}
&	\textrm{and}	&
\begin{array}{rcl}
e_{1}	&	:=	&	a_{1},\\
e_{k}	&	:=	&	b,\\
e_{n-1}	&	:=	&	c_{1},\\
\end{array}
\end{array}\]
for $2\leq j\leq n-1$ and $2\leq k\leq n-2$.  By Proposition \ref{ncyclic}, there is $d_{1}\in\edges_{J}(y,w)$.\\
\[\xymatrix{
&	&	y\ar@(ul,ur)\ar@/^/[ddrr]^{c_{2}}\ar@{..>}@/^/[ddll]^{d_{1}}\\
\\
w\ar@(d,l)\ar@/^/[rrrr]^{a_{1}}	&	&	&	&	x\ar@(r,d)^{b}\ar@/^/[uull]^{c_{1}}\ar@/^/[llll]^{a_{2}}\\
\\
}\]
By a symmetric argument, there is $d_{2}\in\edges_{J}(w,y)$.

\end{proof}

At last, these two lemmas together yield the following characterization of when a $\phi_{n}$-injective quiver is loaded.

\begin{lem}[$\phi_{n}$-injective components, loaded]\label{loaded}
For $n\geq 3$, suppose $J$ is a $\phi_{n}$-injective quiver with $V(J)\neq\emptyset$.  Then, $J$ is loaded if and only if it is both weakly connected and has at least one vertex with a loop.
\end{lem}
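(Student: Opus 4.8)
The plan is to settle the forward implication by inspection and to obtain the reverse implication by two inductions that feed off Lemmas \ref{propagation} and \ref{triangulation}. For ($\Rightarrow$): if $J$ is loaded then $\edges_{J}(v,v)\neq\emptyset$ for every $v\in V(J)$, so every vertex carries a loop (and since $V(J)\neq\emptyset$ at least one does), while $\edges_{J}(v,w)\neq\emptyset$ for all $v,w$ puts every vertex in a single weak component, so $J$ is weakly connected.

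For ($\Leftarrow$), assume $J$ is $\phi_{n}$-injective, weakly connected, and has a loop at some $v_{0}\in V(J)$. First I would show that \emph{every} vertex of $J$ has a loop. Given $v\in V(J)$, weak connectivity yields a path $v_{0}=u_{0},u_{1},\dots,u_{k}=v$ in the underlying undirected multigraph; I induct on $i$ that $u_{i}$ has a loop. The base case $i=0$ is the hypothesis, and for the inductive step the loop at $u_{i}$ together with the edge joining $u_{i}$ and $u_{i+1}$ forms a walk of length $2$ with distinct endpoints (they are distinct because the $u_{j}$ are, being vertices of a path) which either starts or ends with a loop depending on how that edge is oriented, so Lemma \ref{propagation} supplies a loop at $u_{i+1}$.

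Next I would show that for any distinct $u,w\in V(J)$ there is an edge between $u$ and $w$. Take a path $u=u_{0},u_{1},\dots,u_{k}=w$ and induct on $k$; the case $k=1$ is the definition of a path. For $k\geq 2$, the induction hypothesis gives an edge between $u_{0}$ and $u_{k-1}$, there is an edge between $u_{k-1}$ and $u_{k}$, and $u_{k}$ has a loop by the previous paragraph, so these two consecutive edges end in a loop and Lemma \ref{triangulation} produces an edge between $u_{0}$ and $u_{k}$ (the three vertices $u_{0},u_{k-1},u_{k}$ are pairwise distinct, using $u_{0}=u\neq w=u_{k}$). Finally, for distinct $u,w$ apply Lemma \ref{propagation} once more to the edge just found together with a loop at one of its endpoints to get an edge in the opposite direction; with the loops from the first induction this gives $\edges_{J}(v,w)\neq\emptyset$ for all $v,w\in V(J)$, i.e.\ $J$ is loaded. (The single-vertex case is subsumed, both conditions then reducing to ``$v_{0}$ has a loop''.)

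Both inductions are essentially bookkeeping once the two lemmas are available. The only points that need care are the degenerate cases: one must verify the distinct-endpoint hypotheses of Lemmas \ref{propagation} and \ref{triangulation} at each step --- which is exactly why I work with the vertices of a \emph{path} rather than of an arbitrary walk --- and one must establish ``loops everywhere'' before invoking Lemma \ref{triangulation}, whose hypothesis requires a loop at the terminal vertex of the two edges. I do not expect a genuine obstacle beyond this ordering.
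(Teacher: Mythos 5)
Your proposal is correct and follows essentially the same route as the paper: first propagate loops (and reverse edges) along a path from the looped vertex via Lemma \ref{propagation}, then close up arbitrary pairs by induction with Lemma \ref{triangulation}, the only difference being that the paper runs the sequence from $x$ through the looped vertex to $y$ in one pass while you first establish loops everywhere and then connect pairs directly. Your extra care with the distinct-endpoint hypotheses matches the paper's implicit use of paths, so there is no substantive gap.
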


\begin{proof}

$\left(\Rightarrow\right)$ This follows from \cite[Definition 3.1.1]{grilliette3} and \cite[Proposition 3.2.1]{grilliette3}.

$\left(\Leftarrow\right)$ Let $w\in V(J)$ such that there is $f\in\edges_{J}(w,w)$.  Consider arbitrary ${x,y\in V(J)}$.  Since $J$ is weakly connected, there is some finite sequence of edges from $x$ to $w$ and some finite sequence of edges from $w$ to $y$.
\[\xymatrix{
\\
x\ar@{-}[r]	&	\cdots\ar@{-}[r]	&	w\ar@(dr,dl)^{f}\ar@{-}[r]	&	\cdots\ar@{-}[r]	&	y\\
\\
}\]
By induction on the length of the edge sequence from $x$ to $w$ to $y$, Lemma \ref{propagation} yields the following.
\[\xymatrix{
\\
x\ar@(dl,ul)\ar@/^/[r]	&	\cdots\ar@/^/[r]\ar@/^/[l]	&	w\ar@(dr,dl)^{f}\ar@/^/[r]\ar@/^/[l]	&	 \cdots\ar@/^/[r]\ar@/^/[l]	&	y\ar@(ur,dr)\ar@/^/[l]\\
\\
}\]
Induction on the length of the path with Lemma \ref{triangulation} then gives the following.
\[\xymatrix{
\\
&	&	y\ar@(ur,ul)\ar@/^/[ddr]\ar@{..>}@/^/[dd]\ar@{..>}@/^/[ddl]\ar@{..>}@/^/[ddll]\\
\\
x\ar@(ul,dl)\ar@{..>}@/^/[uurr]\ar@/^/[r]	&	\cdots\ar@/^/[r]\ar@/^/[l]\ar@{..>}@/^/[uur]	&	 \bullet\ar@(dr,dl)\ar@/^/[r]\ar@/^/[l]\ar@{..>}@/^/[uu]	&	\bullet\ar@(r,d)\ar@/^/[l]\ar@/^/[uul]\\
\\
}\]

Since $x$ and $y$ were arbitrary, $J$ is loaded.

\end{proof}

This is a very useful criterion since this reduces avoiding loaded quivers to avoiding loops.  However, this criterion does not hold for $n=2$ as shown in the example below.

\begin{ex}[A $\phi_{2}$-injective quiver]\label{thebadcase}
The following quiver is $\phi_{2}$-injective, but not $\phi_{n}$-injective for $n\geq 3$.
\[\xymatrix{
\\
\bullet\ar@/^2pc/[r]\ar@/^/[r]	&	\bullet\ar@(ur,dr)\ar@/^/[l]\\
\\
}\]
\end{ex}

However, $\phi_{2}$-injective quivers are readily understood from Proposition \ref{ncyclic}.

\begin{cor}[Structure of $\phi_{2}$-injective quivers]
A quiver $J$ is $\phi_{2}$-injective if and only if given any non-loop edge in $J$, there is an edge in reverse.
\end{cor}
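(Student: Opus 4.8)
The plan is to obtain this as an immediate specialization of Proposition \ref{ncyclic} with $n=2$, the only real content being the observation that walks of length $n-1=1$ are nothing more than single edges, together with a small case split on whether that edge is a loop.

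First I would invoke Proposition \ref{ncyclic}: $J$ is $\phi_{2}$-injective if and only if for every walk of length $1$ there is an edge from its terminal vertex back to its initial vertex. A walk of length $1$ is precisely a choice of an edge $e\in\edges_{J}(v_{1},v_{2})$, i.e.\ $v_{1}=\sigma_{J}(e)$ and $v_{2}=\tau_{J}(e)$, so the criterion reads: for every edge $e$ of $J$ there is an edge in $\edges_{J}(\tau_{J}(e),\sigma_{J}(e))$.

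Next I would split on whether $e$ is a loop. If $e$ is a loop, then $\sigma_{J}(e)=\tau_{J}(e)$, and $e$ itself lies in $\edges_{J}(\tau_{J}(e),\sigma_{J}(e))$, so the condition is automatically met and imposes nothing. Thus the criterion is equivalent to requiring the edge-reversal condition only for the non-loop edges, which is exactly the statement that for any non-loop edge of $J$ there is an edge in reverse. Combining the two directions of Proposition \ref{ncyclic} with this reduction gives the stated equivalence.

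There is essentially no obstacle here; the only point requiring a moment's care is isolating the loop case and noting that it is vacuously satisfied, so that the quantifier in Proposition \ref{ncyclic} over all walks of length $1$ collapses to a quantifier over non-loop edges. Example \ref{thebadcase} already illustrates that the resulting condition is genuinely weaker than being loaded, so no further commentary is needed.
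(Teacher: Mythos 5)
Your proof is correct and matches the paper's intent: the corollary is stated there as an immediate consequence of Proposition \ref{ncyclic} with $n=2$, and your specialization (walks of length $1$ are single edges, with the loop case vacuous since the loop itself serves as the reverse edge) is exactly the argument the paper leaves to the reader.
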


From this, the bi-directing of any undirected multigraph is naturally $\phi_{2}$-injective, but notice that Example \ref{thebadcase} is not the bi-directing of an undirected multigraph.

\section{Blow-ups of Cycles}

For $n\geq 3$, what remains to be described is the structure of quivers that are $\phi_{n}$-injective and weakly connected with a walk of length $n-1$ and no loops.  The archetype of this case is $C_{m}$, where $m\mid n$.  By Theorem \ref{converse}, all blow-ups of $C_{m}$ are $\phi_{n}$-injective for $m\mid n$.  Moreover, these blow-ups exhaust this final case of $\phi_{n}$-injectivity.

\begin{thm}[Structural characterization of $\phi_n$-injective quivers]\label{structure}
For $n\geq 3$, consider a $\phi_{n}$-injective and weakly connected quiver $J$.  If $J$ is has a walk of length $n-1$, then $J$ is a blow-up of $C_{\ell}$ for some $\ell\mid n$.
\end{thm}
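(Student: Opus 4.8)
The plan is to combine Proposition \ref{ncyclic} (the walk-theoretic reformulation of $\phi_{n}$-injectivity) with an analysis of closed walks of length $n$. First dispose of the case that some vertex of $J$ carries a loop: then Lemma \ref{loaded} shows $J$ is loaded, hence a blow-up of $C_{1}$ by Example \ref{loadedbouquets}, and $1\mid n$. So assume from now on that $J$ has no loops.

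Next I would upgrade weak connectivity to strong connectivity. A walk of length $n-1$ together with its closing edge (Proposition \ref{ncyclic}) gives a closed walk of length $n$, so the set $T$ of vertices lying on some length-$n$ closed walk is nonempty. If $v\in T$ and $u$ is an in- or out-neighbour of $v$, then prepending or appending $u$ to a suitable length-$(n-1)$ segment of $v$'s closed walk and applying Proposition \ref{ncyclic} produces a new edge that completes a length-$n$ closed walk through $u$; thus $u\in T$ and $u,v$ are mutually reachable. Since $J$ is weakly connected, $T=V(J)$: $J$ is strongly connected and every vertex lies on a closed walk of length $n$. The same manoeuvre, applied to an edge $u\to v$ and a length-$n$ closed walk through $v$, shows that every \emph{edge} of $J$ lies on a closed walk of length $n$.

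Let $\ell$ be the period of $J$ (the gcd of the lengths of its closed walks); a length-$n$ closed walk exists, so $\ell\mid n$, and $\ell\geq 2$ since a blow-up of $C_{1}$ is loaded. Strong connectivity and the definition of the period yield a level function $\lambda\colon V(J)\to\mathbb{Z}/\ell$ with $\lambda(\tau_{J}(e))=\lambda(\sigma_{J}(e))+1$ for every edge $e$. Put $A_{i}:=\lambda^{-1}(i)$ and $B_{i}:=\{e:\sigma_{J}(e)\in A_{i}\}$; these are partitions of $V(J)$ and $E(J)$, each $A_{i}$ is nonempty, and identifying the $A_{i}$ with the vertices of $C_{\ell}$ and the $B_{i}$ with its edges, conditions (1) and (2) of the definition of blow-up hold by construction. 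Everything therefore comes down to condition (3): for all $x\in A_{i}$ and $y\in A_{i+1}$ there is an edge $x\to y$.

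This is the crux. I would reduce it to the claim that the out-neighbourhood $N^{+}(a)$ depends only on $\lambda(a)$---equivalently, if $x,x'\in A_{i}$ and $x\to z$ then $x'\to z$. Granting the claim, $N^{+}$ is constant on $A_{i}$, say equal to $S\subseteq A_{i+1}$; since every vertex of $A_{i+1}$ has an in-neighbour in $A_{i}$ (strong connectivity), $S=A_{i+1}$, which is condition (3). To prove the claim, note first that the edge $x\to z$ lies on a length-$n$ closed walk, hence there is a walk of length exactly $n-1$ from $z$ to $x$; the task is then to transport the terminal vertex of this walk from $x$ to $x'$, i.e.\ to produce a walk of length exactly $n-1$ from $z$ to $x'$, after which Proposition \ref{ncyclic} yields $x'\to z$. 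The real obstacle is this last step: promoting ``there is a $z\to x'$ walk whose length is $n-1$ modulo $\ell$'' (automatic from strong connectivity, since $\ell\mid n$) to ``there is one of length exactly $n-1$''. I expect this to require an induction on the distance from $x$ to $x'$, fuelled by a short propagation lemma in the spirit of Lemmas \ref{propagation} and \ref{triangulation} and by the fact---again via Proposition \ref{ncyclic}---that any closed walk longer than $n$ can be shortened to one of length $n$. Once condition (3) is established, $J$ is a blow-up of $C_{\ell}$ with $\ell\mid n$ by definition.
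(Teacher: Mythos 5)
Your opening moves are sound: the loop case via Lemma \ref{loaded} and Example \ref{loadedbouquets} matches the paper's treatment of its $\ell=1$ case, the upgrade from weak to strong connectivity by sliding a length-$n$ closed walk across an edge is correct, and the reduction of blow-up condition (3) to the statement that the out-neighbourhood is constant on each cyclic class is a legitimate reformulation. But there is a genuine gap exactly where you flag ``the real obstacle'': you never establish that constancy. Producing the edge $x'\to z$ requires, via Proposition \ref{ncyclic}, a walk of length \emph{exactly} $n-1$ from $z$ to $x'$, and strong connectivity only hands you a walk of the right length modulo $\ell$. Padding by closed walks of length $\ell$ through $x'$ is unavailable (their existence is essentially condition (3) again), padding by the known closed walks of length $n$ overshoots, and the edges produced by Proposition \ref{ncyclic} point backwards along a walk, so they do not shorten a too-long $z\to x'$ walk. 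The promised ``short propagation lemma'' is the real content of the theorem, not a routine step, so as written the argument is a plan rather than a proof. A secondary, related gap: you assert $\ell\geq 2$ ``since a blow-up of $C_1$ is loaded,'' which assumes the conclusion; a loop-free strongly connected digraph can perfectly well be aperiodic, and ruling that out here again needs the unproven edge-forcing step.

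The paper sidesteps the promotion problem by anchoring everything to a closed walk of \emph{minimum} length $\ell$, say $v_0\to\cdots\to v_{\ell-1}\to v_0$, rather than to the abstract period. Winding around this concrete cycle $q=n/\ell$ times and splicing one or two extra edges onto the ends produces walks of length exactly $n-1$ on demand, so Proposition \ref{ncyclic} can be applied repeatedly: first to force an edge from every element of $B_{j}:=\tau_{J}\bigl(\sigma_{J}^{-1}\left(v_{j-1}\right)\bigr)$ to $v_{j+1}$, then from every element of $B_{j}$ to every element of $B_{j+1}$. Minimality of $\ell$ (not the gcd) is what kills chords between non-consecutive classes, and weak connectivity shows the $B_{j}$ exhaust $V(J)$. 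If you want to salvage your version, replace the period by the minimum closed-walk length and build your length-$(n-1)$ walks explicitly from that cycle.
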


\begin{proof}

By hypothesis, there is a walk $\left(w_{j}\right)_{j=1}^{n}$ in $J$.  By Proposition \ref{ncyclic}, there is an edge from $w_{n}$ to $w_{1}$, meaning that $J$ has a closed walk.  Let
\[
\ell:=\min\{m\in\mathbb{N}:J\textrm{ has a closed walk of length }m\}.
\]
If $\ell=1$, Lemma \ref{loaded} states that $J$ is loaded since $J$ is weakly connected.  By Example \ref{loadedbouquets}, $J$ is a blow-up of $C_{1}$.

If $\ell\neq 1$, let $\left(v_{j}\right)_{j=0}^{\ell-1}$ and $e_{j}\in\edges_{J}\left(v_{j},v_{j+1\pmod\ell}\right)$ for $0\leq j\leq \ell-1$ be a closed walk of minimal length.  Consequently, the vertices are distinct as a repeated vertex yields a shorter closed walk.  Since $\ell\leq n$, $n=q\ell+r$ for some $q\in\mathbb{N}$ and $0\leq r<\ell$.  If $r\neq 0$, the walk $w_{k}:=v_{k\pmod\ell}$ for $0\leq k\leq n-1$ necessitates an edge from $v_{r-1\pmod\ell}$ to $v_{0}$.  This forms a closed walk of length $r$, contradicting the minimality of $\ell$.  Thus, $n=q\ell$.

Define $B_{j+1 \pmod \ell}:=\tau_{J}\left(\sigma_{J}^{-1}\left(v_{j}\right)\right)$ for $0\leq j\leq\ell-1$, the proposed partite sets for the blow-up of $C_{\ell}$.  For $x\in B_{0}$, there is $f\in\edges_{J}\left(v_{\ell-1},x\right)$.  The walk
\[\begin{array}{ccc}
w_{k}:=\left\{\begin{array}{cc}
v_{k\pmod\ell},	&	1\leq k\leq n-1,\\
x,	&	k=n,\\
\end{array}\right.
&	\textrm{and}	&
f_{k}:=\left\{\begin{array}{cc}
e_{k\pmod\ell},	&	1\leq k\leq n-2,\\
f,	&	k=n-1,\\
\end{array}\right.
\end{array}\]
forces an edge from $x$ to $v_{1}$.  Translation of this argument gives an edge from every element of $B_{j}$ to $v_{j+1\pmod\ell}$ for all $0\leq j\leq\ell-1$.

For $x\in B_{0}$ and $y\in B_{1}$, there is $f\in\edges_{J}\left(v_{\ell-1},x\right)$ and $g\in\edges_{J}\left(y,v_{2}\right)$.  The walk
\[\begin{array}{ccc}
w_{k}:=\left\{\begin{array}{cc}
y,	&	k=1,\\
v_{k\pmod\ell},	&	2\leq k\leq n-1,\\
x,	&	k=n,\\
\end{array}\right.
&	\textrm{and}	&
f_{k}:=\left\{\begin{array}{cc}
g,	&	k=1,\\
e_{k\pmod\ell},	&	2\leq k\leq n-2,\\
f,	&	k=n-1,\\
\end{array}\right.
\end{array}\]
forces an edge from $x$ to $y$.  Translation of this argument gives an edge from every element of $B_{j}$ to every element of $B_{j+1\pmod\ell}$ for all $0\leq j\leq\ell-1,$ as desired.

For $j\neq1$, assume that $x\in B_{0}$, $y\in B_{j}$, and $h\in\edges_{J}(x,y)$.  There are\\ ${f\in\edges_{J}\left(y,v_{j+1}\right)}$, $g\in\edges_{J}\left(v_{\ell-1},x\right)$, and $b\in\edges_{J}\left(x,v_{1}\right)$.  If $j>1$, the walk
\[\begin{array}{ccc}
w_{k}:=\left\{\begin{array}{cc}
y,	&	k=1,\\
v_{j+k-1},	&	2\leq k\leq\ell-j,\\
x,	&	k=\ell-j+1,\\
\end{array}\right.
&	\textrm{and}	&
f_{k}:=\left\{\begin{array}{cc}
f,	&	k=1,\\
e_{j+k-1},	&	2\leq k\leq \ell-j-1,\\
g,	&	k=\ell-j,\\
h,	&	k=\ell-j+1,\\
\end{array}\right.
\end{array}\]
becomes a closed walk of length $\ell-j+1$, contradicting the minimality of $\ell$.  If $j=0$, the walk
\[\begin{array}{ccc}
w_{k}:=\left\{\begin{array}{cc}
x,	&	k=1,\\
y,	&	k=2,\\
v_{k-2\pmod\ell},	&	3\leq k\leq n,\\
\end{array}\right.
&	\textrm{and}	&
f_{k}:=\left\{\begin{array}{cc}
h,	&	k=1,\\
f,	&	k=2,\\
e_{k-2\pmod\ell},	&	3\leq k\leq n-1,\\
\end{array}\right.
\end{array}\]
yields $a\in\edges_{J}\left(v_{\ell-2},x\right)$, making the closed walk of length $\ell-1$ below.
\[\begin{array}{ccc}
w_{k}:=\left\{\begin{array}{cc}
v_{k},	&	1\leq k\leq\ell-2,\\
x,	&	k=\ell-1,\\
\end{array}\right.
&	\textrm{and}	&
f_{k}:=\left\{\begin{array}{cc}
e_{k},	&	1\leq k\leq\ell-3,\\
a,	&	k=\ell-2,\\
b,	&	k=\ell-1.\\
\end{array}\right.
\end{array}\]
This again contradicts the minimality of $\ell$.  Therefore, edges from $B_{0}$ terminate in $B_{j}$ only when $j=1$.  Translation of this argument shows that edges from $B_{k}$ terminate in $B_{j}$ only when $j-k\equiv 1\pmod\ell$ as desired.

Assume that $\bigcup_{j=0}^{\ell-1}B_{j}\neq V(J)$.  For $z\not\in\bigcup_{j=0}^{\ell-1}B_{j}$, there is some finite sequence of edges from $z$ to $v_{0},$ since $J$ is weakly connected.  Let $f$ be the first edge in this sequence such that one endpoint is in $\bigcup_{j=0}^{\ell-1}B_{j}$ and the other is not.  Let $x:=\sigma_{J}(f)$ and $y:=\tau_{J}(f)$.  If $x\not\in\bigcup_{j=0}^{\ell-1}B_{j}$ and $y\in B_{j}$ for some $0\leq j\leq\ell-1$, there is $g\in\edges_{J}\left(y,v_{j+1\pmod\ell}\right)$.  Then, the walk
\[\begin{array}{ccc}
w_{k}:=\left\{\begin{array}{cc}
x,	&	k=1,\\
y,	&	k=2,\\
v_{j+k-2\pmod\ell},	&	3\leq k\leq n,\\
\end{array}\right.
&	\textrm{and}	&
f_{k}:=\left\{\begin{array}{cc}
f,	&	k=1,\\
g,	&	k=2,\\
e_{j+k-2},	&	1\leq k\leq n-1,\\
\end{array}\right.
\end{array}\]
forces an edge from $v_{j-2\pmod\ell}$ to $x$, meaning that $x\in B_{j-2\pmod\ell}$.  A similar contradiction occurs if the roles of $x$ and $y$ are reversed.  Therefore, $V(J)=\bigcup_{j=0}^{\ell-1}B_{j}$, meaning $J$ is a blow-up of $C_{\ell}$.

\end{proof}

\section{Concluding Remarks}

As seen with the example of $\phi_{n}$, injectivity relative to a single embedding led to a propagation of structure, characterizing an entire class of quivers.  A logical next step would be to consider different embeddings and to find new classes of examples that might be characterized in this way.  Dually, the same question could be asked with regard to projectivity relative to a covering map, though this seems more mysterious due to Example \ref{projectivefail}.

\begin{figure}
\[\xymatrix{
A\ar@/^/[rr]^{\alpha}\ar@/_/[rr]_{\beta}	&	&		B\\
}\]
\caption{The Category $\cat{K}$}
\label{equalizer}
\end{figure}

Further, different graph-like structures could be considered.  Recall that  the category of quivers can be considered as a either a functor category \cite[Definition I.6.15]{joyofcats} from the category $\cat{K}$ drawn in Figure \ref{equalizer} to $\Set$, or as a comma category \cite[Exercise I.3K]{joyofcats} between the identity functor and diagonal functor on $\Set$.  In either case, replacing $\Set$ with the category of topological spaces creates the category of topological, or continuous, graphs considered in \cite[Definition 1.1]{deaconu2000} and \cite[Definition 2.1]{Li2012}.  Similarly, replacing $\Set$ with a category of weighted, or normed, sets in \cite[\S 2]{grilliette1} creates a respective category of weighted graphs.  Moreover, in the comma category setting, replacing the diagonal functor with the covariant power set functor gives rise to the category of hypergraphs considered in \cite[\S 2]{dorfler1980}.  Thus, the methods and results of this paper could be adapted to each of these settings.

\begin{bibdiv}
\begin{biblist}

\bib{joyofcats}{article}{
      author={Ad{{\'a}}mek, Ji{\v{r}}{\'{\i}}},
      author={Herrlich, Horst},
      author={Strecker, George~E.},
       title={Abstract and concrete categories: the joy of cats},
        date={2006},
     journal={Repr. Theory Appl. Categ.},
      number={17},
       pages={1\ndash 507},
        note={Reprint of the 1990 original [Wiley, New York; MR1051419]},
      review={\MR{MR2240597}},
}

\bib{bang2008}{book}{
      author={Bang-Jensen, J{\o}rgen},
      author={Gutin, Gregory~Z},
       title={Digraphs: theory, algorithms and applications},
   publisher={Springer},
        date={2008},
}

\bib{deaconu2000}{article}{
      author={Deaconu, Valentin},
       title={Continuous graphs and $C^{*}$-algebras},
        date={2000},
     journal={Operator Theoretical Methods (Timi{\c{c}} soara, 1998) Theta
  Found., Bucharest},
       pages={137\ndash 149},
}

\bib{dorfler1980}{article}{
      author={D{\"o}rfler, W.},
      author={Waller, D.~A.},
       title={A category-theoretical approach to hypergraphs},
        date={1980},
        ISSN={0003-889X},
     journal={Arch. Math. (Basel)},
      volume={34},
      number={2},
       pages={185\ndash 192},
         url={http://dx.doi.org/10.1007/BF01224952},
      review={\MR{583768 (83f:05053)}},
}

\bib{graphtransformation}{book}{
      author={Ehrig, H.},
      author={Ehrig, K.},
      author={Prange, U.},
      author={Taentzer, G.},
       title={Fundamentals of algebraic graph transformation},
      series={Monographs in Theoretical Computer Science. An EATCS Series},
   publisher={Springer-Verlag},
     address={Berlin},
        date={2006},
        ISBN={978-3-540-31187-4; 3-540-31187-4},
      review={\MR{2243368 (2007j:68001)}},
}

\bib{grilliette1}{article}{
      author={Grilliette, Will},
       title={Scaled-free objects},
        date={2012March},
     journal={New York J. Math.},
      volume={18},
       pages={275\ndash 289},
      eprint={http://nyjm.albany.edu/j/2012/18-14v.pdf},
         url={http://nyjm.albany.edu/j/2012/18-14.html},
}

\bib{grilliette3}{article}{
      author={Grilliette, Will},
       title={Injective envelopes and projective covers of quivers},
        date={2012},
     journal={Electron. J. Combin.},
      volume={19},
      number={2},
       pages={P39},
  eprint={http://www.combinatorics.org/ojs/index.php/eljc/article/download/v19i2p39/pdf},
  url={http://www.combinatorics.org/ojs/index.php/eljc/article/view/v19i2p39},
}

\bib{hatami2011}{article}{
      author={Hatami, Hamed},
      author={Hirst, James},
      author={Norine, Serguei},
       title={The inducibility of blow-up graphs},
        date={2011},
         eprint={http://arxiv.org/abs/1108.5699v1},
}

\bib{Li2012}{article}{
      author={Li, Hui},
      author={Pask, David},
      author={Sims, Aidan},
       title={An elementary approach to $C^{*}$-algebras associated to topological
  graphs},
        date={201205},
      eprint={http://arxiv.org/abs/1205.3264},
}

\end{biblist}
\end{bibdiv}

\end{document}